\documentclass[12pt]{amsart}

\usepackage{matyi}
\usetikzlibrary{decorations.pathreplacing}
\newcommand{\eps}{\ensuremath{\epsilon} }
\newcommand{\epso}{\ensuremath{\epsilon > 0\;}}
\newcommand{\ta}{\ensuremath{\tau}}
\newcommand{\N}{\ensuremath{\mathbb N} }

\newcommand{\ms}{{\medskip}}

\begin{document}
\title{Slowly Decaying Averages and Fat Towers}
 
\author{James T. Campbell}
\email{jcampbll@memphis.edu}
\author{M\'{a}t\'{e} Wierdl}
\email{mwierdl@memphis.edu}
\address{Department of Mathematical Sciences, Dunn Hall 373, University of Memphis, Memphis, TN 38152}

\maketitle

\section{Introduction and Statement of Results}
\label{sec:intro}

Let $(X,\Sigma,m,\tau)$ be an ergodic system, that is, $(X, \Sigma, m)$ is a probability space and $\tau: X \to X$ is an invertible ergodic $m$-preserving transformation. 
For a function $f:X\to\setR$, let $A_Nf$ denote the $N$th ergodic
average,
\begin{equation}
  \label{eq:7}
  A_Nf(x)=\frac{f(x)+\dots+\tau^ {N-1}f(x)}{N}. 
\end{equation}
Martin Barlow\footnote{Personal communication. M. Barlow: barlow@math.ubc.ca.} asked the following question, which arose from the work of a student (Zichun Ye) on interface models. \medskip


\begin{quest}\label{q:0}
   If $f(x) \ge 0$ is integrable, and 
\begin{equation}
\label{eq:mart}
N(x) = \min \{n: A_kf(x) \le 2 \int f \text{ for all }  k \ge n\},
\end{equation}
is it the case that $N(x)$ is also integrable? 
\end{quest}

The motivation for the question was that a positive answer would allow them to apply estimates obtained in \cite{ADS} (see especially Assumption 1.5) for so-called {\em transition densities} or {\em heat kernels}, to random walks in an ergodic random environment.

In this note we show that the answer to Question \ref{q:0} is no in general, even for bounded functions. In so doing we discover that every ergodic system has a special sort of Kakutani tower which we call a {\em fat tower} (Definition \ref{defn:1} and more generally, Definition \ref{defn:col}). 

We re-cast the question as follows. For non-negative $f(x)$ define the set $E_{f,N}$ by
\begin{equation}
  \label{eq:8}
  E_{f,N}=\cbrace*{x:A_Nf(x)> 2\int_Xf, \text{ and }
       A_nf(x)\le 2\int_Xf \text{ for }n>N}, 
\end{equation}
so that, with $N(x)$ as in \ref{eq:mart}, 
\[ \int_X N(x) \, dm \; = \sum_{N\ge 1}N\cdot  m\paren*{E_{f,N}}\, .\]
Thus an equivalent question is: If $f(x) \ge 0$ is integrable, is it the case that $\sum_{N\ge 1}N\cdot  m\paren*{E_{f,N}}$ is finite?
In the next section we show that in any system with a fat tower, we may find a bounded function (in fact, an indicator function) $f$ for which $\sum_{N\ge 1}N\cdot  m\paren*{E_{f,N}} = \infty$ (Theorem \ref{thm:1}). In the subsequent section we show how to take any given ergodic system and `inflate' it to produce an ergodic system with a fat tower. The final section contains a proof that in fact every aperiodic system on a non-atomic measure space has a fat tower  (Theorem \ref{thm:2}). 

\section{Construction of an indicator counterexample}
\label{sec:ind-count}

While a more general definition of fat tower is given in Section \ref{sec:inflate} (Definition \ref{defn:col}), in this section we work with the following slightly more restricted definition. A fat tower will be a special sort of
Kakutani return-time tower, so we begin  by recalling the Kakutani tower construction (\cite{Kak:43}). Let $B$ be any
set of nontrivial measure,  $0<m(B)<1$.  For each natural number $N$
define the {\em $N^{th}$ first return} set $B_N$ of $B$ by
  \begin{equation}
    \label{eq:9}
    B_N=\cbrace*{x:x\in B, \tau^Nx\in B\text{ and } \tau^nx\notin B\text{
        for } 0<n<N}
  \end{equation}
    For each $N$ consider the $N$th tower
  $T_N$ (which may be empty) defined by
  \begin{equation}
    \label{eq:10}
    T_N=\bigcup_{n<N}\tau^nB_N. 
  \end{equation}
 \begin{center}
    \begin{tikzpicture}
  [
  matyi arrow/.style = {->,>=stealth,thick},
  ]

  \coordinate (bBN) at (0,0);
  \coordinate (eBN) at (3,0);
  \coordinate (mBN) at (1.5,0);
  \draw[ultra thick,red] (0,0)--(3,0);
  \draw[ultra thick,red] (0,1)--(3,1);
  \draw[ultra thick,red] (0,2)--(3,2);
  \draw[dotted,red] (0,3)--(3,3);
  \draw[ultra thick,red] (0,4)--(3,4);
  \draw[dotted,red] (0,5)--(3,5);
  \draw[ultra thick,red] (0,6) --(3,6);
  \node[left] at (0,0) {$B_N$};
  \node[left] at (0,1) {$\tau B_N$};
  \node[left] at (0,2) {$\tau^2 B_N$};
  \node[left] at (0,4) {$\tau^{N/2-1} B_N$};
  \node[left] at (0,6) {$\tau^{N-1} B_N$};

  \draw [decorate,decoration={brace,amplitude=10pt}]
  (-2,0) -- (-2,6)
  node [black,midway,xshift=-.8cm] { $T_N$};
  
  \draw[matyi arrow] (1.5,0)--(1.5,1) node[midway,right] {$\tau$};
  \draw[matyi arrow] (1.5,1)--(1.5,2) node[midway,right] {$\tau$};
  \draw[matyi arrow,dotted] (1.5,2)--(1.5,3) node[midway,right] {$\tau$};
  \draw[matyi arrow,dotted] (1.5,3)--(1.5,4) node[midway,right] {$\tau$};
  \draw[matyi arrow,dotted] (1.5,4)--(1.5,5) node[midway,right] {$\tau$};
  \draw[matyi arrow,dotted] (1.5,5)--(1.5,6) node[midway,right] {$\tau$};
\end{tikzpicture}

  \end{center}
  These towers are paiwise disjoint and their union $T=\bigcup_{N}T_N$
  covers $X$ up to a null set.  This $T$ is the {\em Kakutani tower over
  the base $B$}.
  \begin{defn}
    \label{defn:1}
    We say that a system $(X,\Sigma,m,\tau)$ has a \emph{fat tower} if
    there is a set $B$ with $0<m(B)<1$ so that the  Kakutani
    tower over the base $B$, $T=\bigcup_{N}T_N$, satisfies
    \begin{equation}
      \label{eq:15}
      \sum_NN\cdot m\paren*{T_N}=\infty. 
    \end{equation}
  \end{defn}
  Note that the condition in \cref{eq:15} is equivalent with
  \begin{equation}
    \label{eq:5}
    \sum_NN^2\cdot m\paren*{B_N}=\infty. 
  \end{equation}

\begin{thm}
  \label{thm:1}
  Suppose the system $(X,\Sigma,m)$ has the fat tower property.

  Then there's a set $A$ so that with $f=\mathbbm 1_A$ we have
  \begin{equation}
    \label{eq:6}
    \sum_{N\ge 1}N\cdot  m\paren*{E_{f,N}}=\infty. 
  \end{equation}
\end{thm}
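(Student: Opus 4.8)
The plan is to sidestep the awkward ``last up-crossing'' feature built into the sets $E_{f,N}$ of \cref{eq:8} and to work instead with ordinary super-level sets of the ergodic averages. Write $f=\mathbbm{1}_A$, so $\int_X f=m(A)$. The starting point is a purely pointwise inequality: if $A_n f(x)>2m(A)$ for some $n\ge 1$, then that $n$ lies strictly below the threshold time $N(x)$ of \cref{eq:mart}, whence
\[ N(x)\ \ge\ 1+\sup\{n\ge 1:\ A_n f(x)>2m(A)\}\ \ge\ \#\{n\ge 1:\ A_n f(x)>2m(A)\}. \]
Integrating over $X$ and using Tonelli, $\int_X N\,dm\ \ge\ \sum_{n\ge 1}m\bigl(\{x:\ A_n\mathbbm{1}_A(x)>2m(A)\}\bigr)$. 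Since $\int_X N\,dm=\sum_{N\ge 1}N\,m(E_{f,N})$ as noted above, it is enough to exhibit one set $A$ for which this last series diverges; the point of the reduction is that we no longer have to control an entire forward orbit, only to show that $A_n\mathbbm{1}_A$ exceeds $2m(A)$ for too many values of $n$.

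Next I would choose $A$ so that $A_n\mathbbm{1}_A$ is pinned at $1$ on a large set during a long initial stretch of times, and a fat tower is exactly what supplies long such stretches. Let $T=\bigcup_M T_M$ be a Kakutani tower over a base $B$ witnessing \cref{defn:1}. Recall $m(T_M)=M\,m(B_M)$, that $\sum_M m(T_M)=1$ because the $T_M$ partition $X$ modulo a null set, and that $\sum_M M\,m(T_M)=\sum_M M^2 m(B_M)=\infty$ by \cref{eq:5}. Fix an integer $M_0\ge 2$ so large that $m(A):=\sum_{M\ge M_0}m(T_M)<\tfrac12$; this is possible since the tail of the convergent series $\sum_M m(T_M)$ tends to $0$. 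Put $A:=\bigcup_{M\ge M_0}T_M$, and note $m(A)>0$ because fatness forces $m(B_M)>0$, hence $m(T_M)>0$, for arbitrarily large $M$.

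The key estimate is elementary. If $x$ lies in the lower half of a column $T_M$ with $M\ge\max(M_0,2n)$ --- say $x=\tau^j z$ with $z\in B_M$ and $2j<M$ --- then exiting $T_M$ takes $M-j>M/2\ge n$ steps, so the orbit piece $x,\tau x,\dots,\tau^{n-1}x$ stays inside $T_M\subseteq A$ and therefore $A_n\mathbbm{1}_A(x)=1>2m(A)$. Hence $m\bigl(\{A_n\mathbbm{1}_A>2m(A)\}\bigr)\ge\sum_{M\ge\max(M_0,2n)}\lceil M/2\rceil\,m(B_M)\ge\tfrac12\sum_{M\ge\max(M_0,2n)}m(T_M)$. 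Summing over $n\ge 1$ and interchanging the order of summation (Tonelli again, using $m(T_M)=M\,m(B_M)$ and $\lfloor M/2\rfloor\ge M/4$ for $M\ge 2$),
\[ \sum_{n\ge 1}m\bigl(\{A_n\mathbbm{1}_A>2m(A)\}\bigr)\ \ge\ \tfrac12\sum_{M\ge M_0}\lfloor M/2\rfloor\,m(T_M)\ \ge\ \tfrac18\sum_{M\ge M_0}M^2 m(B_M)\ =\ \infty, \]
the last equality being \cref{eq:5} with finitely many terms deleted. Together with the first paragraph this gives $\int_X N\,dm=\infty$, hence $\sum_{N\ge 1}N\,m(E_{f,N})=\infty$, which is \cref{eq:6}.

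The one genuinely load-bearing step is the reduction in the first paragraph: it replaces the condition ``after time $N$ the averages never again exceed $2\int f$'', whose verification would require understanding the entire forward orbit (where the fine behaviour of the averages, and in particular the position of the last overshoot of $2\int f$, is hard to pin down), by the much softer demand that the averages exceed $2\int f$ often as a function of $n$. Once that is available, the choice of $A$ as the union of all sufficiently tall columns is essentially forced: the ``lower half of a tall column'' bound yields a lower estimate whose divergence is precisely the fat-tower inequality $\sum_M M^2 m(B_M)=\infty$. The remaining points --- that an $M_0$ with $m(A)<\tfrac12$ exists, that $m(A)>0$, and the two bookkeeping applications of Tonelli --- are routine.
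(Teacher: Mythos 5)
Your proposal is correct, and it builds the same example as the paper --- $A$ is the union of all sufficiently tall Kakutani columns, and the divergence is fed by the lower halves of those columns --- but the way you extract \cref{eq:6} from this is genuinely different. The paper stays with the sets $E_{f,n}$ themselves: on the lower half $L_N$ of $T_N$ it uses only the single exceedance $A_Nf\ge 1/2>2m(A)$ (hence its threshold $m(A)<1/4$), deduces $L_N\subset\bigcup_{n\ge N}E_{f,n}$, and then runs a disjointness/double-sum argument with $C_{N,n}=L_N\cap E_{f,n}$ to push the weight $N$ onto $n\cdot m\bigl(E_{f,n}\bigr)$. You instead reduce the theorem to divergence of $\sum_{n}m\bigl(\{x:A_nf(x)>2m(A)\}\bigr)$ via the pointwise bound $N(x)\ge\#\{n: A_nf(x)>2m(A)\}$ and Tonelli, and then count, for each fixed $n$, which columns (those of height $M\ge 2n$) keep the length-$n$ orbit of a lower-half point entirely inside $A$, giving $A_nf=1$ there (so $m(A)<1/2$ suffices); the interchange of the $n$- and $M$-sums returns exactly the fat-tower sum \cref{eq:5}. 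Your accounting is arguably cleaner --- it avoids the $C_{N,n}$ bookkeeping and makes the role of \cref{eq:5} transparent --- at the cost of routing through $\int_XN\,dm$ and the identity stated in the introduction; note that identity is exact only up to a bounded correction (on $E_{f,N}$ one has $N(x)=N+1$, and $N(x)=1$ off $\bigcup_NE_{f,N}$), which is harmless for divergence, and, just like the paper's containment $L_N\subset\bigcup_{n\ge N}E_{f,n}$, it silently uses that $N(x)<\infty$ a.e.; this follows from the Birkhoff theorem under the standing ergodicity assumption because you checked $m(A)>0$. So both arguments rest on the same implicit facts, and yours is a valid, slightly more economical variant.
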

\begin{proof}
  Let $B$ be the set guaranteed by the definition of the fat tower
  property, so we have
  \begin{equation}
    \label{eq:16}
     \sum_NN\cdot m\paren*{T_N}=\infty.
  \end{equation}
  Choose the integer $N_0$ large enough so that
  $m\paren*{ \bigcup_{N\ge N_0}T_N}<1/4$, and then the set $A$ is
  defined simply by
  \begin{equation}
    \label{eq:4}
    A=\bigcup_{N\ge N_0}T_N, 
  \end{equation}
  and hence we have
  \begin{equation}
    \label{eq:17}
    m(A)<1/4.
  \end{equation}
  From now on, unless we say otherwise, $N$ will always be assumed to
  satisfy $N\ge N_0$.  Let $f=\mathbbm 1_A$. We want to make a
  connection between the \emph{lower half} $L_N$ of the towers $T_N$
  and the sets $E_{f,N}$.  So we define
  \begin{align}
    \label{eq:12}
    L_N&=\bigcup_{n\in [0, N/2)} \tau^nB_N, \\
    L&=\bigcup_{N\ge N_0}L_N. 
  \end{align}
  \begin{center}
    \begin{tikzpicture}
  [
  matyi arrow/.style = {->,>=stealth,thick},
  ]

  \coordinate (bBN) at (0,0);
  \coordinate (eBN) at (3,0);
  \coordinate (mBN) at (1.5,0);
  \draw[ultra thick,red] (0,0)--(3,0);
  \draw[ultra thick,red] (0,1)--(3,1);
  \draw[ultra thick,red] (0,2)--(3,2);
  \draw[dotted,red] (0,3)--(3,3);
  \draw[ultra thick,red] (0,4)--(3,4);
  \draw[dotted,red] (0,5)--(3,5);
  \draw[ultra thick,red] (0,6) --(3,6);
  \node[left] at (0,0) {$B_N$};
  \node[left] at (0,1) {$\tau B_N$};
  \node[left] at (0,2) {$\tau^2 B_N$};
  \node[left] at (0,4) {$\tau^{N/2-1} B_N$};
  \node[left] at (0,6) {$\tau^{N-1} B_N$};

  \draw [decorate,decoration={brace,amplitude=10pt}]
  (-2,0) -- (-2,4)
  node [black,midway,xshift=-0.6cm] { $L_N$};
  \draw [decorate,decoration={brace,amplitude=10pt}]
  (-3,0) -- (-3,6)
  node [black,midway,xshift=-0.6cm] { $T_N$};
  
  \draw[matyi arrow] (1.5,0)--(1.5,1) node[midway,right] {$\tau$};
  \draw[matyi arrow] (1.5,1)--(1.5,2) node[midway,right] {$\tau$};
  \draw[matyi arrow,dotted] (1.5,2)--(1.5,3) node[midway,right] {$\tau$};
  \draw[matyi arrow,dotted] (1.5,3)--(1.5,4) node[midway,right] {$\tau$};
  \draw[matyi arrow,dotted] (1.5,4)--(1.5,5) node[midway,right] {$\tau$};
  \draw[matyi arrow,dotted] (1.5,5)--(1.5,6) node[midway,right] {$\tau$};
\end{tikzpicture}

  \end{center}
 As a consequence of \cref{eq:16} we have
  \begin{equation}
    \label{eq:14}
    \sum_{N\ge N_0}N\cdot m(L_N)=\infty. 
  \end{equation}
  We claim that this implies
  \begin{equation}
    \label{eq:1}
    \sum_{N\ge 1}N\cdot  m\paren*{E_{f,N}}=\infty. 
  \end{equation}
 Note that
  \begin{equation}
    \label{eq:13}
    A_Nf(x)\ge 1/2, \quad x\in L_N.  
  \end{equation}
  Since $\int_X f=m(A)<1/4$, we have, as a consequence of \cref{eq:13},
  that $L_N\subset \bigcup_{n\ge N} E_{f,n}$.  Let us set
  $C_{N,n}=L_N\cap E_{f,n}$ for $n\ge N$.  We have
  \begin{equation}
    \label{eq:2}
    N\cdot m(L_N)\le \sum_{n\ge N}n\cdot m(C_{N,n}).
  \end{equation}
  Since the $C_{N,n}$ are pairwise disjoint, summing \cref{eq:2} in
  $N$ over the range $N_0\le N\le K$ we get
  \begin{align}
    \label{eq:3}
    \sum_{N_0\le N\le K} N\cdot m(L_N)
    &\le \sum_{N_0\le N\le K} \sum_{n\ge N}n\cdot m(C_{N,n})\\
    &\le\sum_{n\ge N_0}n\cdot \sum_{N\le n} m(C_{N,n})\\
    \intertext{since $C_{N,n}\subset E_{f,n}$ for $N\le n$ and the
    $C_{N,n}$ are pairwise disjoint}
    &\le \sum_{n\ge N_0}n\cdot m\paren*{E_{f,n}}. 
  \end{align}
  Since $\lim_{K\to\infty}\sum_{N_0\le N\le K} N\cdot m(L_N)=\infty$ by
  \cref{eq:14}, we established \cref{eq:1}. 
\end{proof}

\section{Inflating a system to contain a fat tower}
\label{sec:inflate}

A fat tower need not be defined from a Kakutani return-time tower; the essential property is the growth of the measures of the columns described in (\ref{eq:15}). This is distilled into the following general definition:  

\begin{defn}\label{defn:col}
A {\em column of height k} (over a {\em base} $B$) for \ta \, consists of  pairwise disjoint sets $\{B, \ta B, \tau^2 B, \dots, \ta^{k-1}B\}$. The set $\ta^i B$ is known as the $i^{th}$ level of the column. 

A {\em tower} for \ta \; consists of a (finite or infinite) sequence of pairwise disjoint columns $\{C_n\}$ of heights $\{k_n\}$. 

 A {\em fat tower} for \ta \, is any tower for which 
\begin{equation} 
\label{eq:fatgen}
 \sum_{n = 1}^\infty k_n \cdot m(C_n) = \infty. 
\end{equation}
\end{defn}
 In this section we show how to take any ergodic system on a non-atomic probability space and `inflate' it to create a system which has a fat tower. 

For simplicity we consider a non-atomic, ergodic system $(X, \Sigma, m, \ta)$ where $(X, \Sigma, m)$ is the standard unit interval. We build the inflation $(Y, \mathcal B, \mu, T)$ by using $X$ as the base of a tower. Let $N_i = 2^i$ ($i = 1, 2, \dots$) and partition $X$  into disjoint intervals $B_i$ of $m$-measure $3/(2N_1), 3/(4N_2), \dots$, (so that $\sum_1^\infty m(B_i) = 1$). Above each $B_i$ place a column $C_i$ of height $N_i$, where each level in column $C_i$ is an interval of the same length as $B_i$.  $Y$ is the union of the columns. The sigma-algebra $\mathcal B$ is defined in the natural way. Define the transformation $T$ on each (non-top) level of each column as just moving up the column. The top level may be identified with the base in a natural way,  and $T$ sends the top level to where $\tau$  sent the base. This gives a measurable transformation on all of $Y$. 

 If we define $\mu$ first as agreeing with $m$ on the base and then extending it so that $T$ is $\mu$-preserving, we note that the measure of the column $C_i$ is $3/2^i$ ($i = 1, 2, \dots$), so the $\mu$-measure of $Y$ is $3$. Finally normalize $\mu$ so that $\mu(Y) = 1$. 

It is easily checked that $(Y, \mathcal B, \mu, T)$ is an ergodic system. Finally we point out that the columns $\{C_i\}$, which have heights $\{2^i\}$ and measures $\mu(C_i) = 2^{-i}$, clearly yield a fat tower. The construction of the indicator function given in Section \ref{sec:ind-count}, using the top-half of the columns in the fat tower, will again produce an example with $\int_Y N(y) d\mu(y) = \infty$. 

\section{Universal intrinsic fat tower construction}
\label{sec:fat}

Consider an arbitrary aperiodic system $(X, \Sigma, m, \tau)$, that is,  $(X, \Sigma, m)$ is a non-atomic probability space and $\ta: X \to X$ is an invertible, aperiodic $m$-preserving transformation. We prove
\begin{thm}
\label{thm:2}
Every aperiodic measure-preserving system on a non-atomic probability space possesses a fat tower. 
\end{thm}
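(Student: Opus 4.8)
The goal is to produce, inside an arbitrary aperiodic system $(X,\Sigma,m,\tau)$ on a non-atomic space, a tower (a sequence of pairwise disjoint columns $C_n$ of heights $k_n$) with $\sum_n k_n\, m(C_n)=\infty$. The natural tool is the Rokhlin--Halmos lemma: for every $N$ and every $\delta>0$ there is a set $F$ with $F,\tau F,\dots,\tau^{N-1}F$ pairwise disjoint and $m\paren*{\bigcup_{j<N}\tau^jF}>1-\delta$. I would fix a rapidly increasing sequence of heights, say $k_n=2^n$, together with a summable sequence of ``budgets'' $\eps_n$ (e.g. $\eps_n=2^{-n}$), and build the columns one at a time: at stage $n$ I have already used up a set $U_{n-1}$ of small measure for the earlier columns, and I want a fresh column of height $k_n$, disjoint from $U_{n-1}$, whose base has measure at least roughly $\eps_n/k_n$ — because then the column itself has measure $\gtrsim \eps_n$ and contributes $k_n\cdot m(C_n)\gtrsim \eps_n\cdot$(something that does NOT shrink), so the series diverges. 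Concretely, if I arrange $m(C_n)\ge c/n^2$ for all $n$ with a fixed $c>0$, then $k_n\,m(C_n)$ can be kept bounded below (choosing $k_n\asymp n^2$) and $\sum k_n m(C_n)=\infty$, while $\sum m(C_n)<\infty$ keeps everything fitting inside $X$.

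The construction proceeds recursively. Suppose columns $C_1,\dots,C_{n-1}$ have been built, pairwise disjoint, with $m\paren*{C_1\cup\dots\cup C_{n-1}}<\sum_{j<n}\eps_j<1$. Apply Rokhlin's lemma inside the measure-preserving system induced on the complement $X\setminus(C_1\cup\dots\cup C_{n-1})$ — or, more simply, apply it in $X$ with a tower of height $k_n$ and error parameter small enough that the Rokhlin base $F_n$ can be thinned to a subset $B_n$ that is disjoint from $C_1\cup\dots\cup C_{n-1}$ and still has $m(B_n)\ge \tfrac{1}{2k_n}\paren*{1-\sum_{j<n}\eps_j-\delta_n}$; pick $\delta_n$ tiny. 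Set $C_n=\bigcup_{i<k_n}\tau^iB_n$; its height is $k_n$, its levels are pairwise disjoint (they sit inside a Rokhlin tower), and it is disjoint from the previously built columns by the choice of $B_n$. Then $m(C_n)=k_n\,m(B_n)\ge \tfrac12(1-\sum_{j}\eps_j)>0$ uniformly in $n$, so in particular $k_n\,m(C_n)=k_n^2\,m(B_n)\to\infty$ and the fat-tower series \cref{eq:fatgen} diverges trivially. One must also check the columns can be kept pairwise disjoint cumulatively; this is automatic since each $C_n\subset X$ and we only ever remove the already-used part before invoking Rokhlin, and since $\sum_n m(C_n)$ need not be finite at all — the definition of tower only requires the columns be pairwise disjoint, which we have built in directly. (If one wants $\sum m(C_n)<\infty$ as well, shrink $B_n$ further so $m(C_n)=2^{-n}$, say, and take $k_n=4^n$; then $\sum k_n m(C_n)=\sum 2^n=\infty$ still.)

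The one technical point requiring care is the passage to the induced system, or equivalently the thinning step: Rokhlin's lemma gives a base $F_n$ with $m\paren*{\bigcup_{i<k_n}\tau^iF_n}>1-\delta_n$, but we need the resulting column disjoint from $C_1\cup\dots\cup C_{n-1}$. The clean way is to run Rokhlin's lemma for the transformation $\tau_n$ induced on $X_n:=X\setminus(C_1\cup\dots\cup C_{n-1})$, which is still aperiodic on a non-atomic space of positive measure (aperiodicity of $\tau$ passes to inducing); this yields $B_n\subset X_n$ with $B_n,\tau_n B_n,\dots$ filling most of $X_n$. But then the column we want is built with $\tau$, not $\tau_n$, so the levels $\tau^iB_n$ may leave $X_n$ and re-enter the old columns. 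To avoid this one instead argues directly in $X$: apply Rokhlin in $X$ for height $k_n$ with $\delta_n$ so small that $m\paren*{\bigcup_{i<k_n}\tau^iF_n}>1-\delta_n>m(C_1\cup\dots\cup C_{n-1})$, and then let $B_n=F_n\setminus\bigcup_{i<k_n}\tau^{-i}(C_1\cup\dots\cup C_{n-1})$ — this removes from the base exactly the points whose $\tau$-orbit segment of length $k_n$ meets the old columns, guaranteeing the new column avoids them, at the cost of $m(B_n)\ge m(F_n)-k_n\cdot m(C_1\cup\dots\cup C_{n-1})/1$... which is too lossy. The correct bookkeeping, then, is to keep $\sum_j m(C_j)$ extremely small relative to $1/k_n$ at each stage, which forces $k_n$ to grow and the budgets $\eps_j$ to decay fast; this is a routine but slightly delicate induction, and I expect it to be the main obstacle — balancing ``$k_n$ large enough that $k_n m(C_n)$ diverges'' against ``total measure used so far small enough that there is still room for a height-$k_n$ column disjoint from everything prior.'' Once the numerology is set up (e.g. $k_n=n^2\cdot 2^n$, $m(C_n)\asymp 2^{-n}$, so $k_n m(C_n)\asymp n^2\to\infty$), the verification is mechanical and Theorem~\ref{thm:2} follows.
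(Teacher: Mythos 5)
You correctly isolate the crux of the problem---arranging that the new height-$k_n$ column be disjoint from the columns already built---but the resolution you defer to (``a routine but slightly delicate induction,'' with $k_n=n^2\cdot 2^n$ and $m(C_n)\asymp 2^{-n}$) cannot be carried out, and the obstacle is not mere numerology. In your scheme the $n$th column must be certified disjoint from $U_{n-1}:=C_1\cup\dots\cup C_{n-1}$ by deleting from the Rokhlin base $F_n$ (which has measure at most $1/k_n$, its levels being disjoint) all points whose length-$k_n$ orbit segment meets $U_{n-1}$. The only a priori control on the deleted set is a worst-case bound: $k_n\,m(U_{n-1})$ as you write it, and even the sharper estimate obtained by summing over the disjoint levels of the Rokhlin tower gives only $m(U_{n-1})$, since Rokhlin's lemma gives no control over where the new tower sits relative to $U_{n-1}$. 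Hence to guarantee a nonempty surviving base you need $m(U_{n-1})<1/k_n$ (with your cruder estimate, $<1/k_n^2$), i.e.\ $k_n<1/m(U_{n-1})\le 1/m(C_1)$ (respectively $1/\sqrt{m(C_1)}$) for every $n\ge 2$: once the first column of positive measure is laid down, all later heights are capped by a fixed constant, and then $\sum_{n\ge 2}k_n\,m(C_n)\le m(C_1)^{-1}\sum_n m(C_n)\le m(C_1)^{-1}<\infty$, because pairwise disjoint columns in a probability space have total measure at most $1$. So no choice of heights and budgets closes your induction, and your proposed numerology violates the constraint immediately, since $m(U_{n-1})\ge m(C_{n-1})\asymp 2^{-n+1}\gg 1/k_n$. (Two smaller slips: the claim that $m(C_n)\ge\tfrac12(1-\sum_j\epsilon_j)>0$ uniformly in $n$, and the remark that $\sum_n m(C_n)$ ``need not be finite at all,'' are both impossible for pairwise disjoint subsets of a probability space; and the induced-map route fails for exactly the reason you yourself give.)

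The missing idea---the one the paper uses---is to reverse who yields to whom. At stage $n$ the paper extracts the new column at full size, a column of height $k_n$ and measure $1/k_n$ sliced from a fresh Kakutani--Rohlin tower over all of $X$, with no attempt to avoid the earlier columns; disjointness is then restored by trimming each \emph{old} column: from the column in position $j$ (of height $k_j$) one removes its intersection with the new column together with the forward and backward saturation of that intersection inside the old column, so that what remains is still a column. The cost to the old column is at most $k_j\cdot(1/k_n)$---the multiplier is the old, \emph{fixed} height $k_j$, not the new, growing height $k_n$---so if the $k_j$ grow fast enough (the paper takes $k_j=8^{2^j}$) the total loss to the $j$th column over all later stages is controlled by a fraction of its original measure $1/k_j$; the nested limit columns $T_j$ then satisfy $k_j\,m(T_j)\ge c>0$ for every $j$, the series \cref{eq:fatgen} diverges, and a Borel--Cantelli argument stabilizes the leftover height-one column $X_n$. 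Your scheme fails precisely because it charges the avoidance cost to the tall new column, where it is multiplied by $k_n$; the paper charges it to the short old columns, where it is multiplied only by $k_j$, and each old column only ever needs to retain a fixed fraction of its measure.
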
 \ms

Thus, one could say that every aperiodic system is fat. 
 \begin{proof}[Proof of Theorem \ref{thm:2}:] We remind the reader that a {\em Kakutani-Rohlin} tower of height $k$ and error $\eps$ consists of a partition of $X$ into two sets, a column of height $k$ and an error set $E$, with $m(E) < \eps$. It is well-known that every aperiodic system possesses a Kakutani-Rohlin tower of height $k$ and error \eps for all $k \in \N$ and \epso (\cite{Kak:43}, \cite{Roh:52}.).

Fix a sequence of rapidly increasing natural numbers $1 = k_1 << k_2 << k_3 \dots$ whose minimal growth rate will be determined. \ms

We define a sequence of towers with the following three properties: 
\begin{enumerate}
\item[P1:] The $n^{th}$ tower partitions $X$. 
\item[P2:] The $n^{th}$ tower will have $n$ columns with heights $1, k_2, k_3, \dots, k_n$. 
\item[P3:] There is a constant $c > 0$ so that for all $j$ and $n$, the $j^{th}$ column of the $n^{th}$ tower has measure at least $c/k_j$.  
\end{enumerate}

 We then show that the limiting tower exists and has the same properties for all $n \in \N$, which is sufficient. \ms

First consider the following construction. For a natural number $k$, build (from all of $X$) a Kakutani-Rohlin tower of height $k+1$ and error $10^{-k}$. From this tower, extract a vertical slice of height $k$ whose total measure is $1/k$. That is,  take a measurable subset of the base with measure $1/k^2$ together with its images under the action of \ta \, for $k-1$ steps. We call this  {\em extracting a $(k, 1/k)$-column from $X$}. Clearly, it may be done for any $k \in \N$. \ms

Our sequence of towers satsifying P1 - P3 is defined as follows.  The first tower consists solely of $X$. \ms

Next, extract a $(k_2, 1/k_2)$-column $T_{(2, 1)}$ from $X$, and define our new tower as the pair of disjoint columns $\{(X \setminus T_{(2, 1)}), T_{(2, 1)}\} = \{X_2, T_{(2, 1)}\}$.  Note that $m(X_2) = 1 - 1/k_2$. Hence this second tower satisfies each of P1 - P3 above, with $c = 1 - 1/k_2$.  \ms

Now extract a $(k_3, 1/k_3$) column $T_{(3, 1)}$ from $X$. Some of this column may have come from $T_{(2,1)}$ and some of it from $X_2$. We therefore modify these columns as follows.

\begin{enumerate}
\item Delete from $X_2$ any portion of $T_{(3, 1)}$ which came from $X_2$. Call this new set $\tilde{X_2}$. The set deleted has measure at most $1/k_3$, so that $m(\tilde{X_2}) \ge 1 - 1/k_2 - 1/k_3$. 
\item Let $L_i$ denote the $i^{th}$ level of $T_{(2,1)}$. Define $C_0 = L_0 \cap T_{(3,1)}$ and for $i \ge 1$ define

\[C_i =   \{x \in L_i: x \in T_{(3,1)}\} \bigcap   \left[ \bigcup_{j = 1}^i \{x \in L_i: \tau^{-j}x \notin T_{(3,1)}\}\right] .\]
In other words, $C_i$ consists of the points in level $L_i$ which are in $T_{(3,1)}$, but none of whose pre-images in $T_{(2,1)}$ are in $T_{(3,1)}$. 

Now let 

\[ C = \bigcup_{i = 0}^{k_2-1} \;  \bigcup_{j = -i}^{k_2 - i - 1} \tau^j (C_i) .\]

Thus $C$ consists of all the forward and backward images of $C_i$ in the column $T_{(2,1)}$. Remove all of $C$ from $T_{(2, 1)}$, call what remains $T_{(2,2)}$.  \ms

Typically not all of $C$ will actually be in $T_{(3,1)}$ (that is, we have removed more from $T_{(2,1)}$ than just those pieces that were in $T_{(3,1)}$.) Each $C_i$, and any of the forward images of  $C_i$ which actually stay in the column $T_{(3,1)}$, will be in $T_{(3,1)}$ of course, but we cannot guarantee any more than that. Moreover we don't even know how many forward images of a given $C_i$ lie in $T_{(3,1)}$. It is possible for example that $C_0$ came from a level near the top of $T_{(3,1)}$ and therefore only a few forward images stay in $T_{(3,1)}$.  The reason we removed all of $C$ is to ensure that $T_{(2,2)}$ is a column (which the reader may check), thereby preserving the tower property.

Note that $T_{(2,2)}$ has measure at least $1/k_2 - k_2\cdot 1/k_3$. \ms

\item Finally,  {\em place back into $\tilde{X_2}$} any of $C$ which is not in $T_{(3,1)}$. Call this new set $X_3$.  
\end{enumerate}
Summarizing: We first extracted a $(k_3, 1/k_3)$ column $T_{(3,1)}$from $X$. Then we modified $X_2$ by removing any portion of $T_{(3,1)}$ which appears there, creating $\tilde{X_2}$.  Then we modified $T_{(2,1)}$ by removing any portion of $T_{(3,1)}$ which appears there, plus all its possible forward and backward images under \ta \, which remain in $T_{(2,1)}$. Then we created $X_3$ from  $\tilde{X_2}$ by placing back the portions removed from $T_{(2,1)}$ which were not actually in $T_{(3,1)}$. \ms

We observe that all of the employed set operations are finite combinations of intersection, union, and complementation applied to measurable sets, so that all sets under consideration are measurable. \medskip

At this stage we have a tower $\{X_3, T_{(2,2)}, T_{(3,1)}\}$ satisfying P1 - P3 above, although the constant $c$ in P3 has been reduced from the constant that went with the tower $\{X_2, T_{(2,1)}\}$. \ms

Inductively continue this process, so that at the $n^{th}$ stage we have: 
\begin{enumerate}
\item A tower consisting of columns $\{X_n, T_{(2, n-1)}, T_{(3, n-2)}, \dots, T_{(n, 1)}\}$.
\item Column $T_{(n,1)}$ was obtained by extracting a $(k_n, 1/k_n)$-column from $X$. 
\item The columns $T_{(j, n-j+1)}$ were obtained from the columns $T_{(j, n -j)}$ by removing a subset of measure at most $k_j \cdot 1/k_n$, $2 \le j \le n-1$. 
\item The set $X_n$ is obtained from $X_{n-1}$ by first removing a set of measure at most $1/k_n$ and then possibly adding another set.
\item The tower satisfies P1 \& P2.
\end{enumerate}
We need to ensure that P3 is satisfied for all the columns in this tower, for each $n$. But this is easily accomplished by specifying the growth of the $k_j$'s.\ms

The first column of the $n^{th}$ tower is $X_n$. It is obtained from $X_{n-1}$ by first removing a set of measure $1/k_n$, and then possibly adding some other sets back in. In any case, we see that its measure satisfies
\[ m(X_n) \ge 1 - \sum_{j = 2}^n 1/k_j\, .\]
Thus we first require that the $k_j$'s sum to less than 1/8, say.\ms

Now let's re-examine the construction of the other columns. If we fix $n$ and consider our tower at the $(n-1)^{st}$ stage, $\{X_{n-1}, T_{(2, n - 2)},  T_{(3,n-3)} \dots , T_{(n-1), 1)}\}$, we see that for each $j$, $2 \le j \le n-1$, we construct $T_{(j, n-j+1)}$ from $T_{(j, n-j)}$ by removing a set of measure at most $k_j\cdot 1/k_n$. In particular, the sequence of columns in the $j^{th}$ position are nested: 
\begin{equation}
  \label{eq:nest}
 T_{(j,1)} \supset T_{(j,2)} \supset \dots \supset T_{(j, n-j+1)} \, .  
\end{equation}

Thus, if we require 
\[ \forall j, \; \sum_{t = 1}^\infty k_j \cdot \frac{1}{k_{j+t}} \; < \; 1/8 \, , \]
then we will have, for all $n$ and all $2 \le j \le n$, 
\[m(T_{(j, n-j+1)} \ge 7/8 \cdot m(T_{(j, 1)}) = (7/8) \cdot (1/k_j)\, .\]
Thus P3 is satisfied with $c = 7/8$. \ms 

For the sake of definiteness, we set $k_j = 8^{2^j}$.

Now we discuss the limiting tower. First we consider what is happening for the $j^{th}$ columns with $j \ge 2$. Because of the nested property (\ref{eq:nest}), the limiting column $T_j$ defined by  
\[ T_j = \bigcap_{n=j}^\infty T_{(j, n-(j-1))}\]
is measurable, has height $k_j$, and measure at least $(7/8)\cdot (1/k_j)$. \ms 

Now we consider the sequence of first columns, $X = X_1, X_2, X_3, \dots$. These are not nested, necessarily. $X_n$ is constructed by removing a piece from $X_{n-1}$, and then adding some pieces back in. We claim that the set of points for which this occurs infinitely often is a nullset. The measure of the piece taken out of $X_{n-1}$ has measure at most $1/k_n$ while the measure of the pieces put back into $X_{n-1}$ is at most $\sum_{j = 2}^{n-1} k_j/k_n$. Thus the total measure of the points which are removed or replaced is 
\[ \sum_{n = 2}^\infty \left[ \frac{1}{k_n} \left(1 + \sum_{j = 2}^\infty \frac{k_j}{k_n} \right)\right]\, .\]
The reader may check that when $k_j = 8^{2^j}$, this sum is finite. Thus by the Borel-Cantelli Lemma, the set of points which move in or out infinitely often is a nullset. This means that almost all of $X \setminus \left(\bigcup_{j = 2}^\infty T_j\right)$ remains in each $X_n$ for sufficiently large $n$. We take this set as our limiting $X_{\infty}$. The tower $\{X_\infty, T_2, T_3, \dots\}$ now satisfies P1-P3, and is fat, concluding the proof of Theorem \ref{thm:2}.  \end{proof}

\bibliographystyle{amsalpha}
\bibliography{tower}

\end{document}